  \theoremstyle{plain}
    \newtheorem{thm}{Theorem}[section]
    \newtheorem{prop}[thm]{Proposition}
   \newtheorem{lemma}[thm]{Lemma}
    \newtheorem{subsec}[thm]{}
\theoremstyle{definition}
    \newtheorem{defn}[thm]{Definition}
        \newtheorem{remark}[thm]{Remark}
    \newtheorem{exam}[thm]{Example}
\theoremstyle{remark}
\title{}
\author{}
\date{}
\begin{document}

\title{Noncommutative differential calculus on (co)homology of hom-associative algebras}

\author{Apurba Das}

\maketitle

\begin{center}
Department of Mathematics and Statistics,\\
Indian Institute of Technology, Kanpur 208016, Uttar Pradesh, India.\\
Email: apurbadas348@gmail.com
\end{center}



\begin{abstract}
A hom-associative algebra is an algebra whose associativity is twisted by an algebra homomorphism.
It was previously shown by the author that the Hochschild cohomology of a hom-associative algebra $A$ carries a Gerstenhaber structure. In this short paper, we show that this Gerstenhaber structure together with certain operations on the Hochschild homology of $A$ makes a noncommutative differential calculus. As an application, we obtain a Batalin-Vilkovisky algebra structure on the Hochschild cohomology of a regular unital symmetric hom-associative algebra.\\
\end{abstract}

\noindent {\sf 2020 MSC classification:}  16E40, 17A30, 17A99.

\noindent {\sf Keywords:} Hom-associative algebra, Hochschild (co)homology, Cyclic comp module, Noncommutative differential calculus, BV algebra.

\thispagestyle{empty}


\vspace{0.2cm}

\section{Introduction}
The existence of higher structures (such as Gerstenhaber algebras or Batalin-Vilkovisky algebras) on cohomology or homology of a certain algebraic structure was initiated by M. Gerstenhaber in the study of Hochschild cohomology of associative algebras \cite{gers}. Later, a more sophisticated approach of his result was given by Gerstenhaber and Voronov using operad with multiplication in connections with Deligne's conjecture \cite{gers-voro}. 
However, it can only ensure the existence of a Gerstenhaber structure. In differential geometry and noncommutative geometry, one wants a more concrete structure of differential calculus to understand the full account of the picture \cite{nest-tsy,tamar-tsy}. A pair $(\mathcal{A}, \Omega )$ consisting of a Gerstenhaber algebra $\mathcal{A}$ and a graded space $\Omega$ is called a calculus if $\Omega$ carries a module structure over the algebra $\mathcal{A}$ (given by a map $i$) and a module structure over the Lie algebra $\mathcal{A}^{\bullet + 1}$ (given by a map $\mathcal{L}$) and a differential $B : \Omega_\bullet \rightarrow \Omega_{\bullet +1}$ that mixes $\mathcal{L}, i$ and $B$ by the Cartan-Rinehart homotopy formula. For a smooth manifold $M$, the pair $(\mathcal{X}^\bullet (M), \Omega^\bullet (M))$ of multivector fields and differential forms; for an associative algebra $A$, the pair $(H^\bullet (A), H_\bullet (A))$ of Hochschild cohomology and homology yields differential calculus structure.  In \cite{kowal} Kowalzig extends the result of Gerstenhaber and Voronov by introducing a cyclic comp module over an operad (with multiplication). Such a structure induces a simplicial homology on the underlying graded space of the comp module and certain action maps. When passing onto the cohomology and homology, one gets a differential calculus structure.

In this paper, we first construct a new example of differential calculus in the context of hom-associative algebras. A hom-associative algebra is an algebra whose associativity is twisted by a linear homomorphism \cite{makh-sil-0}. Such twisted structures were first appeared in the context of Lie algebras to study $q$-deformations of Witt and Virasoro algebras \cite{hart}. Hom-associative algebras are widely studied in the last 10 years from various points of view. In \cite{amm-ej-makh,makh-sil} Hochschild cohomology and deformations of hom-associative algebras are studied, whereas, homological perspectives are studied in \cite{hasan}. The homotopy theoretic study of hom-associative algebras are considered in \cite{das3}. In \cite{das1} the present author showed that the space of Hochschild cochains of a hom-associative algebra $A$ carries a structure of an operad with a multiplication yields the cohomology a Gerstenhaber algebra (see also \cite{das2}). Here we show that the space of Hochschild chains of $A$ forms a cyclic comp module over the above-mentioned operad. The induced simplicial homology coincides with the Hochschild homology of $A$. Hence, following the result of Kowalzig, we obtain a differential calculus structure on the pair of Hochschild cohomology and homology of $A$. See Section \ref{section-3} for details clarification.

Finally, as an application, we obtain a Batalin-Vilkovisky algebra structure on the Hochschild cohomology of a regular unital symmetric hom-associative algebra. This generalizes the corresponding result for associative algebras obtained by Tradler \cite{tradler}.

Throughout the paper, $k$ is a commutative ring of characteristic $0$. All linear maps and tensor products are over $k$.

\section{Noncommutative differential calculus and cyclic comp modules}
In this section, we recall noncommutative differential calculus and cyclic comp modules over non-symmetric operads. We mention how a cyclic comp module induces a noncommutative differential calculus. Our main references are \cite{gers-voro,kowal}. We mainly follow the sign conventions of \cite{kowal}.

\begin{defn}
(i) A Gerstenhaber algebra over $k$ is a graded $k$-module $\mathcal{A} = \oplus_{i \in \mathbb{Z}} \mathcal{A}^i$ together with a graded commutative, associative product $\cup : \mathcal{A}^p \otimes \mathcal{A}^q \rightarrow \mathcal{A}^{p+q}$ and a degree $-1$ graded Lie bracket $[~, ~]: \mathcal{A}^p \otimes \mathcal{A}^q \rightarrow \mathcal{A}^{p+q-1}$ satisfying the following Leibniz rule
\begin{align*}
[f, g \cup h ] = [f , g ] \cup h + (-1)^{(p-1) q} g \cup [f, h], ~~ \text{ for } f \in \mathcal{A}^p, g \in \mathcal{A}^q, h \in \mathcal{A}.
\end{align*}

(ii) A pair $(\mathcal{A}, \Omega)$ consisting of a Gerstenhaber algebra $\mathcal{A} = ( \mathcal{A}, \cup, [~, ~])$ and a graded $k$-module $\Omega$ is called a precalculus if there is a graded $(\mathcal{A}, \cup)$-module structure on $\Omega$ given by $i : \mathcal{A}^p \otimes \Omega_n \rightarrow \Omega_{n-p}$ and a graded Lie algebra module by $\mathcal{L} : \mathcal{A}^{p+1} \otimes  \Omega_n \rightarrow \Omega_{n-p}$ satisfying
\begin{align}\label{eqn-t}
i_{[f, g]} = i_f \circ \mathcal{L}_g - (-)^{p (q+1)} \mathcal{L}_g \circ i_f, ~ \text{ for } f \in \mathcal{A}^p, g \in \mathcal{A}^q.
\end{align}

(iii) A precalculus $(\mathcal{A}, \Omega)$ is said to be a calculus if there is a degree $+1$ map $B : \Omega_\bullet \rightarrow \Omega_{\bullet + 1}$ satisfying $B^2 =0$ and the following Cartan-Rinehart homotopy formula holds
\begin{align*}
\mathcal{L}_f = B \circ i_f - (-1)^p~ i_f \circ B, ~ \text{ for } f \in \mathcal{A}^p.
\end{align*}
\end{defn}

\begin{defn}
A non-symmetric operad $\mathcal{O}$ in the category of $k$-modules consists of a collection $\{ \mathcal{O}(p) \}_{p \geq 1}$ of $k$-modules together with $k$-bilinear maps (called partial compositions) $\circ_i : \mathcal{O}(p) \otimes \mathcal{O}(q) \rightarrow \mathcal{O}(p+q-1)$, for $1 \leq i \leq p$, satisfying the following identities
\begin{align*}
(f \circ_i g) \circ_j h = \begin{cases} (f \circ_j h) \circ_{i+p-1} g ~~~&\mbox{if } j <i\\
f \circ_i (g \circ_{j-i+1} h) ~~~&\mbox{if } i \leq j < q+i\\ 
(f \circ_{j-q+1} h) \circ_i g ~~~&\mbox{if } j \geq q+i \end{cases}
\end{align*}
for $f \in \mathcal{O}(p),~ g \in \mathcal{O}(q),~ h \in \mathcal{O}(r)$;
and there is a distinguished element $\mathds{1} \in \mathcal{O}(1)$ satisfying $\mathds{1} \circ_1 f = f = f \circ_i \mathds{1}$, for $f \in \mathcal{O}(p)$ and $1 \leq i \leq p$.
\end{defn}

In an operad $\mathcal{O}$, there is a degree $-1$ bracket $[~, ~]: \mathcal{O}(p) \otimes \mathcal{O}(q) \rightarrow \mathcal{O}(p+q-1)$ given by
\begin{align*}
[f, g] := \sum_{i=1}^p (-1)^{(q-1)(i-1)} f \circ_i g - (-1)^{(p-1)(q-1)} \sum_{i=1}^q (-1)^{(p-1)(i-1)} g \circ_i f.
\end{align*}

Let $\mathcal{O} = (\mathcal{O}, \circ_i, \mathds{1})$ be a non-symmetric operad. An element $\pi \in \mathcal{O}(2) $ is called a multiplication on $\mathcal{O}$ if it satisfies $\pi \circ_1 \pi = \pi \circ_2 \pi$. Note that a multiplication induces a differential $\delta_\pi : \mathcal{O}(n) \rightarrow \mathcal{O}(n+1),~ \delta_\pi (f) = [\pi, f].$
We denote the corresponding cohomology groups by $H^n_\pi (\mathcal{O})$, for $n \geq 1$. 
Further, a multiplication $\pi$ induces a degree $0$ product $f \cup g := (\pi \circ_2 f) \circ_1 g$. In \cite{gers-voro} the authors showed that these two operations passes onto the cohomology $H^\bullet_\pi (\mathcal{O})$ and make it a Gerstenhaber algebra.

\begin{defn}
(i) A comp module over an operad $\mathcal{O} = (\mathcal{O}, \circ_i, \mathds{1}) $ consists of a sequence of $k$-modules $\{ \mathcal{M} (n) \}_{n \geq 0}$ together with $k$-bilinear operations (called comp module maps) $\bullet_i : \mathcal{O} (p) \otimes \mathcal{M}(n) \rightarrow \mathcal{M}(n-p+1)$, for $p \leq n$ and $1 \leq i \leq n-p+1$, satisfying the following identities
\begin{align}\label{eqn-p}
f \bullet_i ( g \bullet_j x ) = \begin{cases} g \bullet_j ( f \bullet_{i+q-1} x ) ~~~ & \mbox{ if } j < i \\
( f \circ_{j-i+1} g) \bullet_i x ~~~ & \mbox{ if } j-p \leq i \leq j \\
g \bullet_{j-p+1} ( f \bullet_i x) ~~~ & \mbox{ if } 1 \leq i \leq j-p, \end{cases}
\end{align}
for $f \in \mathcal{O}(p), g \in \mathcal{O}(q)$ and $x \in \mathcal{M}(n)$. It is called unital comp module if
\begin{align}\label{eqn-q}
\mathds{1} \bullet_i x = x, ~ \text{ for } i =1, \ldots, n.
\end{align}

(ii) A cyclic (unital) comp module over $\mathcal{O}$ is a unital comp module $\mathcal{M}$ equipped with an additional comp module map $\bullet_0 : \mathcal{O}(p) \otimes \mathcal{M}(n) \rightarrow \mathcal{M} (n-p+1)$, for $p \leq n+1$ such that the relations (\ref{eqn-p}) and (\ref{eqn-q}) hold $i=0$ as well; and a $k$-linear map $t : \mathcal{M}(n) \rightarrow \mathcal{M}(n)$, for $n \geq 1$, satisfying $t^{n+1} = \mathrm{id}$ and
\begin{align}\label{eqn-r}
t ( f \bullet_i x) = f \bullet_{i+1} t(x), ~ \text{ for } i= 0, 1, \ldots, n-p.
\end{align}
\end{defn}

Let $(\mathcal{O}, \pi)$ be a non-symmetric operad with a multiplication and $\mathcal{M}$ be a cyclic unital comp module over $\mathcal{O}$. Then there is a simplicial boundary map $b : \mathcal{M}(n) \rightarrow \mathcal{M}(n-1)$, for $n \geq 1$, given by
\begin{align}\label{simplicial-boun}
b (x) = \sum_{i=0}^{n-1} (-1)^i~ \pi \bullet_i x + (-1)^n ~\pi \bullet_0 t (x).
\end{align}
We denote the corresponding homology groups by $H_\bullet (\mathcal{M}).$ Moreover, there is a cap product $i_f := f \cap \_ : \mathcal{M}(n) \rightarrow \mathcal{M}(n-p)$, for $f \in \mathcal{O}(p)$ given by
\begin{align}\label{cap-pro}
i_f x = ( \pi \circ_2 f) \bullet_0 x, ~\text{ for } x \in \mathcal{M}(n)
\end{align}
and a Lie derivative $\mathcal{L}_f : \mathcal{M}(n) \rightarrow \mathcal{M}(n-p+1)$, for $f \in \mathcal{O}(p)$ given by
\begin{align}\label{lie-pro}
\mathcal{L}_f x := \begin{cases}  \sum_{i=1}^{n-p+1} (-1)^{(p-1)(i-1)} f \bullet_i x + \sum_{i=1}^p (-1)^{n (i-1) + p-1} f \bullet_0 t^{i-1} (x) ~~ & \mbox{ if } p < n+1\\
(-1)^{p-1} \sum_{i=0}^n (-1)^{in} f \bullet_0 t^i (x)  ~~ & \mbox{ if } p = n+1. \end{cases}
\end{align}
It has been shown in \cite[Proposition 4.3, Theorem 4.4]{kowal} that these two operators satisfy the following identities
\begin{align}
&i_{f \cup g} = i_f \circ i_g, ~~~~ i_{\delta_\pi f} = b \circ i_f - (-1)^p~ i_f \circ b, \label{eqn-ab}\\
&\mathcal{L}_{[f, g]} = \mathcal{L}_f \circ \mathcal{L}_g - (-1)^{(p-1)(q-1)} \mathcal{L}_g \circ \mathcal{L}_f , ~~~~ \mathcal{L}_{\delta_\pi f} = - b \circ \mathcal{L}_f + (-1)^{(p-1)} \mathcal{L}_f \circ b. \label{eqn-cd}
\end{align}
It follows from the above identities that the cap product and the Lie derivative descend to the simplicial homology $H_\bullet (\mathcal{M})$ and the graded homology $H_\bullet (\mathcal{M})$ is a module over both the algebra $(H^\bullet_\pi (\mathcal{O}), \cup)$ and the graded Lie algebra $(H^{\bullet + 1}_\pi (\mathcal{O}), [~, ~])$. It has been further proved in \cite[Theorem 4.5]{kowal} that the induced operations on (co)homology satisfies the identity (\ref{eqn-t}) to make the pair $(H^\bullet_\pi (\mathcal{O}), H_\bullet (\mathcal{M))}$ is a precalculus.

Some additional structure on the operad $\mathcal{O}$ makes the above precalculus into a calculus. We start with the following.
\begin{defn}\label{defn-unit-opera} An operad with multiplication $(\mathcal{O}, \pi)$ is called unital if there is a $k$-module $\mathcal{O}(0)$ so that the partial compositions $\circ_i$ extend to $\mathcal{O}(0)$, and there is an element $e \in \mathcal{O}(0)$ satisfying $\pi \circ_1 e = \pi \circ_2 e = \mathds{1}$.
\end{defn}

If $(\mathcal{O}, \pi)$ is unital and $\{ \mathcal{M}(n) \}_{n \geq 0}$ is a cyclic comp module over $\mathcal{O}$ (in the sense that (\ref{eqn-p}) and (\ref{eqn-r}) holds for $p=0$ as well), then one may define a differential $B : \mathcal{M}(n) \rightarrow \mathcal{M}(n+1)$, called the Connes boundary operator, by
\begin{align*}
B := \sum_{i=0}^n (-1)^{in}  (\mathrm{id} -t ) t ( e \bullet_{n+1} t^i (x)).
\end{align*}
Like classical Hochschild case, one may consider normalized cochain complex $\{ \overline{\mathcal{O}}, \delta_\pi \}$ that induces the same cohomology with $H^\bullet_\pi (\mathcal{O})$. Similarly, a normalized chain complex $\{ \overline{\mathcal {M}}, b \}$ can be considered which induces the same homology with $H_\bullet (\mathcal{M})$ \cite{kowal}.
It has been further shown in \cite{kowal} that on the normalized (co)chain complexes
\begin{align}\label{cartan-pre-id}
\mathcal{L}_f = [B, i_f ] + [b, S_f] - S_{\delta_\pi f}, \text{ for } f \in \overline{\mathcal{O}}(p),
\end{align}
where the map $S_f : \mathcal{M} (n) \rightarrow \mathcal{M} (n-p+2)$ is given by
\begin{align*}
S_f = \sum_{j=1}^{n-p+1} \sum_{i=j}^{n-p+1} (-1)^{n(j-1) + (p-1)(i-1) } ~e \bullet_0 ( f \bullet_i t^{j-1} (x)), \text{ for } 0 \leq p \leq n.
\end{align*}
and $S_f = 0$, for $p > n$. Thus, it follows from (\ref{cartan-pre-id}) that the Cartan-Rinehart homotopy formula holds on the induced (co)homology. Hence the pair $(H^\bullet_\pi (\mathcal{O}), H_\bullet (\mathcal{M}))$ is a differential calculus.

\section{Hom-associative algebras and calculus structure}\label{section-3}
In this section, we first recall hom-associative algebras and their Hochschild (co)homologies \cite{makh-sil-0}, \cite{amm-ej-makh}, \cite{hasan}. In the next, we show that the pair of cohomology and homology forms a precalculus. Under some additional conditions on the hom-associative algebra, the precalculus turns out to be a  noncommutative differential calculus.

\begin{defn}
A hom-associative algebra is a $k$-module $A$ together with a $k$-bilinear map $\mu : A \otimes A \rightarrow A, (a, b) \mapsto a \cdot b $ and a $k$-linear map $\alpha : A \rightarrow A$ satisfying the following hom-associativity:
\begin{align*}
(a \cdot b ) \cdot \alpha (c) = \alpha (a) \cdot ( b \cdot c ), ~ \text{ for } a, b, c \in A.
\end{align*} 
\end{defn}
A hom-associative algebra as above is denoted by the triple $(A, \mu, \alpha )$. It is called multiplicative if $\alpha ( a \cdot b ) = \alpha (a) \cdot \alpha (b)$, for $a, b \in A$. In the rest of the paper, by a hom-associative algebra, we shall always mean a multiplicative hom-associative algebra.
It follows from the above definition that any associative algebra is a (multiplicative) hom-associative algebra with $\alpha = \mathrm{id}_A$.

A hom-associative algebra $(A, \mu, \alpha )$ is said to be unital if there is an element $1 \in A$ such that $\alpha (1) = 1$ and $a \cdot 1 = 1 \cdot a = \alpha (a),$ for all $a \in A$.

\begin{exam}\label{hom-alg-exam}
Let $(A, \mu)$ be an associative algebra over $k$ and $\alpha : A \rightarrow A$ be an algebra homomorphism. Then $(A, \mu_\alpha =\alpha \circ \mu , \alpha )$ is a hom-associative algebra, called obtained by composition. If the associative algebra $(A, \mu)$ is unital and $\alpha$ is an unital associative algebra morphism then the hom-associative algebra $(A, \mu_\alpha =\alpha \circ \mu , \alpha )$ is unital with the same unit.
\end{exam}

Let $(A, \mu, \alpha)$ be a hom-associative algebra. A bimodule over it consists of a $k$-module $M$ and a linear map $\beta : M \rightarrow M$ with actions $l : A \otimes M \rightarrow M,~ (a,m) \mapsto am$, and $r: M \otimes A \rightarrow M, ~ (m,a) \mapsto ma$ satisfying $\beta (am) = \alpha (a) \beta(m),~ \beta (ma) = \beta(m) \alpha (a)$ and
the following bimodule conditions are hold
\begin{align*}
(a \cdot b)\beta( m) = \alpha(a) (bm) ~~~~ (am)\alpha(b) = \alpha(a)(mb) ~~~~ (ma) \alpha(b) = \beta(m) (a \cdot b), ~ \text{ for } a, b \in A, m \in M.
\end{align*} 
A bimodule can be simply denoted by $(M, \beta)$ when the actions are understood. It is easy to see that $(A, \alpha)$ is a bimodule with left and right actions are given by $\mu$. 

Let $(A, \mu, \alpha)$ be a hom-associative algebra and $(M, \beta)$ be a bimodule over it. The group of $n$-cochains of $A$ with coefficients in $(M, \beta)$ is given by $C^n_\alpha ( A, M) := \{ f : A^{\otimes n} \rightarrow M |~ \beta \circ f = f \circ \alpha^{\otimes n} \},$ for $n \geq 1$. The coboundary map $\delta_\alpha : C^n_\alpha (A, M) \rightarrow C^{n+1}_\alpha (A, M)$ given by
\begin{align}
(\delta_\alpha f)(a_1, \ldots, a_{n+1}) :=~& \alpha^{n-1}(a_1)  f ( a_2, \ldots, a_{n+1})  + (-1)^{n+1} f (a_1, \ldots, a_n) \alpha^{n-1} (a_{n+1}) \\
~&+ \sum_{i=1}^n (-1)^i~ f ( \alpha (a_1), \ldots, \alpha (a_{i-1}), a_i \cdot a_{i+1}, \alpha (a_{i+2}), \ldots, \alpha (a_{n+1})). \nonumber
\end{align}
The corresponding cohomology groups are denoted by $H^n_\alpha (A, M)$, for $n \geq 1$. When the bimodule is given by $(A, \alpha)$, the corresponding cochain groups are denoted by $C^n_\alpha (A)$ and the cohomology groups are denoted by $H^n_\alpha (A)$, for $n \geq 1$.

In this paper, we only require the Hochschild homology of $A$ with coefficients in itself. For homology with coefficients, see \cite{hasan}. The $n$-th Hochschild chain group of $A$ with coefficients in itself is given by $C_n^\alpha (A) := A \otimes A^{\otimes n}$, for $n \geq 0$ and the boundary operator $d^\alpha : C_n^\alpha (A) \rightarrow C_{n-1}^\alpha (A)$ given by
\begin{align}\label{hoch-hom-boundary}
d^\alpha ( a_0 \otimes a_1 \cdots a_n) :=~& a_0 \cdot a_1 \otimes \alpha (a_2) \cdots \alpha (a_n) + (-1)^n a_n \cdot a_0 \otimes \alpha (a_1) \cdots \alpha (a_{n-1}) \\
~&+ \sum_{i=1}^{n-1} (-1)^i~ \alpha (a_0) \otimes \alpha (a_1) \cdots (a_i \cdot a_{i+1}) \cdots \alpha (a_n). \nonumber
\end{align}
The corresponding homology groups are denoted by $H_n^\alpha (A)$, for $n \geq 0$.


Let $(A, \mu, \alpha )$ be a hom-associative algebra. It has been shown in \cite{das1} that the collection of Hochschild cochains $\mathcal{O} = \{ \mathcal{O}(p) = C^p_\alpha (A) \}_{p \geq 1}$ forms a non-symmetric operad with partial compositions
\begin{align}\label{hom-ope}
(f \circ_i g ) ( a_1, \ldots, a_{p+q-1}) = f ( \alpha^{q-1} (a_1), \ldots, \alpha^{q-1} ( a_{i-1}), g (a_i, \ldots, a_{i+q-1}), \ldots, \alpha^{q-1} ( a_{p+q-1})),
\end{align}
for $f \in \mathcal{O}(p), g \in \mathcal{O}(q)$ and the identity element $\mathds{1} = \mathrm{id}_A.$ Moreover, the element $\mu \in \mathcal{O}(2) = C^2_\alpha (A)$ is a multiplication in the above operad. The differential induced by $\mu$ is same as $\delta_\alpha$ up to a sign. Hence the graded space of Hochschild cohomology $H_\alpha^\bullet (A)$ carries a Gerstenhaber structure.

\subsection{Precalculus structure}

Let $\mathcal{M}(n) := A \otimes A^{\otimes n}$, for $n \geq 0$ be the $n$-th Hochschild chain group of $A$. For convenience, we denote the basic elements of $\mathcal{M}(n)$ by $a_0 \otimes a_1 \cdots a_n$ when there is no confusion causes. For $p \leq n$ and $1 \leq i \leq n-p+1$, we define maps $\bullet_i : \mathcal{O}(p) \otimes \mathcal{M}(n) \rightarrow \mathcal{M}(n-p+1)$ by
\begin{align*}
f \bullet_i ( a_0 \otimes a_1 \cdots a_n ) := \alpha^{p-1}( a_0 ) \otimes \alpha^{p-1}( a_1) \cdots \alpha^{p-1}(a_{i-1}) f (a_i, \ldots, a_{i+p-1}) \alpha^{p-1}(a_{i+p}) \cdots \alpha^{p-1}( a_n). 
\end{align*} 
\begin{prop}
With these notations, $\mathcal{M} = \{ \mathcal{M}(n) \}_{n \geq 0}$ is a unital comp module over the operad $\mathcal{O}$.
\end{prop}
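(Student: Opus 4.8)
The plan is to verify directly the two defining conditions of a unital comp module: the compatibility relations (\ref{eqn-p}) and the unit relation (\ref{eqn-q}). I work throughout on a basic element $x = a_0 \otimes a_1 \cdots a_n$ of $\mathcal{M}(n)$. First note that the operations $\bullet_i$ are well defined: the space $\mathcal{M}(n) = A \otimes A^{\otimes n}$ carries no equivariance constraint, so the stated formula for $f \bullet_i x$ is an unambiguous element of $\mathcal{M}(n-p+1)$. The only properties of $\alpha$ that the verification uses are $\alpha^a \circ \alpha^b = \alpha^{a+b}$ and the cochain condition satisfied by every $f \in \mathcal{O}(p) = C^p_\alpha(A)$, namely $\alpha^s \circ f = f \circ (\alpha^s)^{\otimes p}$ for all $s \geq 0$; in particular, hom-associativity plays no role in this Proposition.

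The unit relation is immediate: since $\mathds{1} = \mathrm{id}_A$ has arity $p = 1$, every factor $\alpha^{p-1}$ in the definition of $\mathds{1} \bullet_i x$ is $\alpha^0 = \mathrm{id}$ and $\mathds{1}(a_i) = a_i$, so $\mathds{1} \bullet_i x = x$ for each $i = 1, \ldots, n$. For the compatibility relation, fix $f \in \mathcal{O}(p)$, $g \in \mathcal{O}(q)$ and first expand $y := g \bullet_j x$: its slots at positions $k < j$ are $\alpha^{q-1}(a_k)$, its slot at position $j$ is $g(a_j, \ldots, a_{j+q-1})$, and its slots at positions $k > j$ are $\alpha^{q-1}(a_{k+q-1})$. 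I then apply $f \bullet_i$ to $y$ and track, slot by slot, the powers of $\alpha$. When the window of $p$ consecutive slots of $y$ on which $f$ acts lies entirely to one side of the slot carrying $g(\ldots)$ (the first and third lines of (\ref{eqn-p})), that slot gets hit by $\alpha^{p-1}$ and, by the cochain condition, rewrites as $g$ applied to the corresponding $\alpha^{p-1}(a_{\ast})$, while $f$ is applied to slots all of the form $\alpha^{q-1}(a_{\ast})$, so the cochain condition pulls an $\alpha^{q-1}$ out in front; every slot untouched by $f$ or $g$ ends up as $\alpha^{p+q-2}(a_{\ast})$. Performing the same bookkeeping on the right-hand sides $g \bullet_j (f \bullet_{i+q-1} x)$ and $g \bullet_{j-p+1}(f \bullet_i x)$ yields the identical tensor, settling those two cases.

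The remaining case is the one in which the window of $f$ meets the slot holding $g(\ldots)$; this is where the operad structure of $\mathcal{O}$ genuinely intervenes and where I would be most careful. There, $f$ is applied to a string of the form $\alpha^{q-1}(a_i), \ldots, \alpha^{q-1}(a_{j-1}), g(a_j, \ldots, a_{j+q-1}), \alpha^{q-1}(a_{j+q}), \ldots, \alpha^{q-1}(a_{i+p+q-2})$, and a direct comparison with the defining formula (\ref{hom-ope}) for $f \circ_{j-i+1} g$ evaluated on the block $(a_i, \ldots, a_{i+p+q-2})$ shows that the combined slot of $f \bullet_i(g \bullet_j x)$ coincides exactly with the combined slot of $(f \circ_{j-i+1} g) \bullet_i x$ — this is precisely what the weights $\alpha^{q-1}$ built into the partial compositions are there for. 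The slots untouched by $f$ or $g$ are again $\alpha^{p+q-2}(a_{\ast})$ on both sides, so the two sides agree. The sole real difficulty in the whole argument is this index-and-exponent bookkeeping: once one fixes the dictionary relating positions in $x$, in $g \bullet_j x$, and in the arguments of $f \circ_{j-i+1} g$, each of the three cases reduces to a routine check.
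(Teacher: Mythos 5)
Your proposal is correct and follows essentially the same route as the paper: a direct, case-by-case verification of (\ref{eqn-p}) and (\ref{eqn-q}) on basic tensors, using only the composition of powers of $\alpha$ and the cochain condition $\alpha^{s}\circ f = f\circ(\alpha^{s})^{\otimes p}$ to match the exponents slot by slot, with the middle case reducing to the defining formula (\ref{hom-ope}) for $f\circ_{j-i+1}g$. Your delineation of the overlapping case as $j-p< i\le j$ (rather than $j-p\le i\le j$) is also the one the paper's proof actually uses.
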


\begin{proof}
We have to verify the identities (\ref{eqn-p}) and (\ref{eqn-q}). First, for $j < i$, we have
\begin{align*}
&f \bullet_i ( g \bullet_j (a_0 \otimes a_1 \cdots a_n )) \\&= f \bullet_i \big( \alpha^{q-1}(a_0) \otimes \alpha^{q-1}(a_1) \cdots g (a_j, \ldots, a_{j+q-1}) \cdots \alpha^{q-1} (a_n) \big) \\
&= \alpha^{p+q-2} (a_0) \otimes \alpha^{p+q-2} (a_1) \cdots \alpha^{p+q-2} (a_{j-1}) \alpha^{p-1} ( g (a_j, \ldots, a_{j+q-1} )) \cdots \\& \qquad \qquad \qquad \qquad \qquad \qquad f (\alpha^{q-1} (a_{i+q-1}), \ldots, \alpha^{q-1}(a_{i+p+q-2})) \cdots \alpha^{p+q-2} (a_n).
\end{align*}
On the other hand,
\begin{align*}
&g \bullet_j ( f \bullet_{i+q-1} (a_0 \otimes a_1 \cdots a_n )) \\
&= g \bullet_j \big( \alpha^{p-1} (a_0) \otimes \alpha^{p-1} (a_1) \cdots f (a_{i+q-1} , \ldots, a_{i+p+q-2}) \cdots \alpha^{p-1} (a_n)    \big)\\
&= \alpha^{p+q-2} (a_0) \otimes \alpha^{p+q-2} (a_1) \cdots \alpha^{p+q-2} (a_{j-1})  g ( \alpha^{p-1}(a_j), \ldots, \alpha^{p-1}(a_{j+q-1}) ) \cdots \\& \qquad \qquad \qquad \qquad \qquad \qquad \alpha^{q-1}(f ( a_{i+q-1}, \ldots, a_{i+p+q-2})) \cdots \alpha^{p+q-2} (a_n).
\end{align*}
Hence $f \bullet_i ( g \bullet_j (a_0 \otimes a_1 \cdots a_n )) = g \bullet_j ( f \bullet_{i+q-1} (a_0 \otimes a_1 \cdots a_n ))$. Similarly, for  $j-p <i \leq j$, we can verify that
\begin{align*}
&f \bullet_i ( g \bullet_j (a_0 \otimes a_1 \cdots a_n )) \\
&= \alpha^{p+q-2} (a_0) \otimes \alpha^{p+q-2} (a_1) \cdots f \big(\alpha^{q-1} (a_i), \ldots, g (a_j, \ldots, a_{j+q-1}), \ldots, \alpha^{q-1} (a_{i+p+q-2})    \big) \cdots \alpha^{p+q-2} (a_n)\\
&= (f \circ_{j-i+1} g) \bullet_i (a_0 \otimes a_1 \cdots a_n ).
\end{align*}
Finally, for $1 \leq i \leq j-p$, we have
\begin{align*}
&f \bullet_i ( g \bullet_j (a_0 \otimes a_1 \cdots a_n )) \\ 
&= \alpha^{p+q-2} (a_0) \otimes \alpha^{p+q-2} (a_1) \cdots f (\alpha^{q-1}(a_i), \ldots, \alpha^{q-1}(a_{i+p-1})) \cdots \alpha^{p-1} (g (a_j , \ldots, a_{j+q-1})) \cdots \alpha^{p+q-2} (a_n)\\
&= g \bullet_{j-p+1} ( f \bullet_i (a_0 \otimes a_1 \cdots a_n )).
\end{align*}
It is also easy to see that $\mathds{1} \bullet_i (a_0 \otimes a_1 \cdots a_n ) = (a_0 \otimes a_1 \cdots a_n )$, for all $a_0 \otimes a_1 \cdots a_n \in \mathcal{M}(n)$. Hence the proof.
\end{proof}

We define additional maps $\bullet_0 : \mathcal{O}(p) \otimes \mathcal{M}(n) \rightarrow \mathcal{M}(n-p+1)$, for $p \leq n+1$ and $t : \mathcal{M} (n) \rightarrow \mathcal{M}(n)$ by
\begin{align*}
f \bullet_0 ( a_0 \otimes a_1 \cdots a_n ) :=~& f(a_0, \ldots, a_{p-1}) \otimes \alpha^{p-1}( a_p) \cdots \alpha^{p-1}( a_n),\\
t (  a_0 \otimes a_1 \cdots a_n ) :=~& a_n \otimes a_0 a_1 \cdots a_{n-1}.
\end{align*}

\begin{prop}
With these additional structures, $\mathcal{M}$ is a cyclic comp module over $\mathcal{O}$.
\end{prop}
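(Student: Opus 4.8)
The plan is to build on the previous proposition. Since we already know that $\mathcal{M}=\{\mathcal{M}(n)\}_{n\ge 0}$ is a unital comp module over $\mathcal{O}$, it only remains to verify the three extra ingredients in the definition of a cyclic comp module: that the new operation $\bullet_0$ satisfies the relations (\ref{eqn-p}) and (\ref{eqn-q}) with the index $0$ allowed, that $t^{n+1}=\mathrm{id}$ on $\mathcal{M}(n)$, and that $t$ intertwines the operations $\bullet_i$ as in (\ref{eqn-r}). The easy points I would dispose of first: since $\mathds{1}=\mathrm{id}_A$, the relation $\mathds{1}\bullet_0(a_0\otimes a_1\cdots a_n)=a_0\otimes a_1\cdots a_n$ is immediate from the definition of $\bullet_0$; and since $t$ rotates the $n+1$ tensor slots of $a_0\otimes a_1\cdots a_n$ by one position, $t^{n+1}=\mathrm{id}$ is a one-line check.

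The bulk of the work is to run the case analysis of (\ref{eqn-p}) once more, now permitting $i=0$ and/or $j=0$. One expands $f\bullet_0$ and $g\bullet_0$ from their definitions and compares with the three shapes on the right-hand side of (\ref{eqn-p}): $g\bullet_j(f\bullet_{i+q-1}x)$ when the two insertion blocks are ordered, $(f\circ_{j-i+1}g)\bullet_i x$ when the $g$-block sits inside the $f$-block, and $g\bullet_{j-p+1}(f\bullet_i x)$ when the blocks are disjoint. As in the proof of the previous proposition, the only identity used is the $\alpha$-equivariance $\alpha^k(f(b_1,\ldots,b_p))=f(\alpha^k(b_1),\ldots,\alpha^k(b_p))$ valid for $f\in C^p_\alpha(A)$, which is what lets one push the $\alpha^{p-1}$-twist introduced by $f\bullet_0$ (and the $\alpha^{q-1}$-twist by $g\bullet_0$) onto the slots already touched by the other operation; the asymmetric shape of $\bullet_0$ — no twist before the inserted value, $\alpha^{p-1}$ afterwards — is compatible with this for the same reason it is for $\bullet_i$, $i\ge 1$, since the inserted value $f(a_0,\ldots,a_{p-1})$ occupies slot $0$ and the remaining slots carry $\alpha^{p-1}$, exactly mirroring the $i\ge 1$ case.

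Finally, for (\ref{eqn-r}): writing $x=a_0\otimes a_1\cdots a_n$, the left side $t(f\bullet_i x)$ inserts $f(a_i,\ldots,a_{i+p-1})$ in slot $i$ among $\alpha^{p-1}$-twisted factors and then rotates, while the right side $f\bullet_{i+1}t(x)$ first re-indexes via $t(x)=a_n\otimes a_0\cdots a_{n-1}$ and then inserts $f$ at slot $i+1$, again on $(a_i,\ldots,a_{i+p-1})$. The hypothesis $i\le n-p$ guarantees that the block $a_i,\ldots,a_{i+p-1}$ lies inside $a_0,\ldots,a_{n-1}$ and hence does not wrap past the element $a_n$ that $t$ moves to the front, so the two expressions coincide slot by slot; the case $i=0$, which involves $\bullet_0$, is handled identically, with $f$ landing on $a_0,\ldots,a_{p-1}$.

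The main obstacle is purely combinatorial: keeping the $\alpha$-exponents (the powers $p-1$, $q-1$, $p+q-2$ that arise) consistent across the sub-cases of (\ref{eqn-p}) with a zero index, and confirming that the cyclic rotation $t$ commutes with block insertion in precisely the stated range of indices. No use of hom-associativity itself is made at this stage — it enters only later, through the multiplication $\mu$, when one turns to the simplicial boundary map.
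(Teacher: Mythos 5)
Your proposal is correct and follows essentially the same route as the paper: the paper likewise disposes of $t^{n+1}=\mathrm{id}$ and the unit relation as immediate, refers the $\bullet_0$ cases of (\ref{eqn-p}) back to the case analysis of the previous proposition (using exactly the $\alpha$-equivariance $\alpha^k\circ f=f\circ(\alpha^k)^{\otimes p}$ you invoke), and verifies (\ref{eqn-r}) by the same slot-by-slot comparison for $0\le i\le n-p$. Your added remarks on why the asymmetric shape of $\bullet_0$ causes no trouble and why the block $a_i,\ldots,a_{i+p-1}$ cannot wrap past $a_n$ are accurate elaborations of what the paper leaves implicit.
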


\begin{proof}
Similar to the proof of the previous proposition, one shows that $\bullet_0$ satisfy the identities (\ref{eqn-p}) and (\ref{eqn-q}). The operator $t$ obviously satisfies $t^{n+1} = \mathrm{id}$. Finally, for any $1 \leq i \leq n-p$,
\begin{align*}
t ( f \bullet_i ( a_0 \otimes a_1 \cdots a_n ) ) =~& t ( a_0 \otimes a_1 \cdots f (a_i, \ldots, a_{i+p-1}) \cdots a_n )\\
=~& a_n \otimes a_0 \cdots f (a_i, \ldots, a_{i+p-1})  \cdots a_{n-1} \\
=~& f \bullet_{i+1} (a_n \otimes a_0 a_1 \cdots a_{n-1}) = f \bullet_{i+1} t (a_0 \otimes a_1 \cdots a_n ).
\end{align*}
The same holds for $i=0$. Hence the proof.
\end{proof}

Consider the operad $\mathcal{O}$ with the multiplication $\pi \in \mathcal{O}(2)$.  As $\mathcal{M}$ is a cyclic comp module over $\mathcal{O}$, the corresponding simplicial boundary map $b $ as of (\ref{simplicial-boun}) is precisely given by the Hochschild boundary operator $d^\alpha$ given in (\ref{hoch-hom-boundary}). Hence $H_\bullet (\mathcal{M})$ is given by the Hochschild homology $H_\bullet^\alpha (A)$ of the hom-associative algebra $A$. 

Note that, in this example, the corresponding cap product (\ref{cap-pro}) and Lie derivative (\ref{lie-pro}) are given by
\begin{center}
 $i_f ( a_0 \otimes a_1 \cdots a_n ) =  \alpha^{p-1} (a_0) \cdot f (a_1, \ldots, a_p) \otimes \alpha^p (a_{p+1}) \cdots \alpha^p (a_n),$
 \end{center}
 \begin{align*}
 \mathcal{L}_f ( a_0 \otimes& a_1 \cdots a_n ) = \sum_{i=1}^{n-p+1} (-1)^{(p-1)(i-1)} ~\alpha^{p-1} (a_0) \otimes \alpha^{p-1} (a_1) \cdots f (a_i, \ldots, a_{i+p-1}) \cdots \alpha^{p-1} (a_n)\\
 &+ (-1)^{p-1} f (a_0, \ldots, a_{p-1}) \otimes \alpha^{p-1} (a_p) \cdots \alpha^{p-1} (a_n) \\
& + \sum_{i=2}^{p-1} (-1)^{n(i-1) + p-1} ~f(a_{n-i+2}, \ldots, a_{n-i+p+1}) \otimes \cdots \alpha^{p-1} (a_n) \alpha^{p-1}(a_0) \cdots \alpha^{p-1} (a_{n-i+1})\\
& + (-1)^{(n+1)(p-1)}~ f(a_{n-p+2}, \ldots, a_n, a_0) \otimes \alpha^{p-1}(a_1) \cdots \alpha^{p-1} (a_{n-p+1}), ~ \text{ for } p < n+1.
\end{align*}

Hence we get the following.

\begin{thm}
Let $(A, \mu, \alpha )$ be a hom-associative algebra. Then the pair $(H^\bullet_\alpha (A), H_\bullet^\alpha (A))$ of Hochschild cohomology and Hochschild homology of $A$ forms a precalculus.
\end{thm}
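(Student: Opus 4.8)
The plan is to deduce this theorem directly from the general machinery recalled in Section~2, so that almost no new computation is required beyond what has already been set up. Recall from \cite{das1} that $\mathcal{O} = \{ C^p_\alpha(A) \}_{p \geq 1}$ is a non-symmetric operad with the partial compositions \eqref{hom-ope} and that $\mu \in \mathcal{O}(2)$ is a multiplication whose induced differential agrees with $\delta_\alpha$ up to sign; consequently $H^\bullet_\alpha(A) = H^\bullet_\mu(\mathcal{O})$ is a Gerstenhaber algebra. The two preceding propositions establish that $\mathcal{M} = \{ A \otimes A^{\otimes n} \}_{n \geq 0}$, with the operations $\bullet_i$, $\bullet_0$ and the cyclic operator $t$, is a cyclic unital comp module over $\mathcal{O}$. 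Thus the hypotheses of \cite[Proposition 4.3, Theorems 4.4 and 4.5]{kowal} are satisfied for the pair $(\mathcal{O}, \mathcal{M})$ with multiplication $\pi = \mu$.

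The key steps, in order, are as follows. First I would invoke the identification of the simplicial boundary map: substituting $\pi = \mu$ into \eqref{simplicial-boun} and expanding $\mu \bullet_i(-)$, $\mu \bullet_0 t(-)$ using the explicit formulas for $\bullet_i$, $\bullet_0$ and $t$, one checks term by term that $b$ coincides with the Hochschild boundary $d^\alpha$ of \eqref{hoch-hom-boundary}; hence $H_\bullet(\mathcal{M}) = H^\alpha_\bullet(A)$. This is the computation already indicated in the paragraph preceding the theorem, and I would present it in a line or two. Second, I would record that the cap product \eqref{cap-pro} and the Lie derivative \eqref{lie-pro}, when specialized to this comp module, are given by the explicit formulas for $i_f$ and $\mathcal{L}_f$ displayed just above the theorem — again a direct unwinding of the definitions using $\pi \circ_2 f$ and the operations $\bullet_0$, $\bullet_i$, $t$. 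Third, and finally, I would appeal to \eqref{eqn-ab}, \eqref{eqn-cd} and \cite[Theorem 4.5]{kowal}: these identities, valid for any cyclic unital comp module over an operad with multiplication, show that $i$ and $\mathcal{L}$ descend to homology, make $H_\bullet(\mathcal{M})$ a module over $(H^\bullet_\mu(\mathcal{O}), \cup)$ and over the graded Lie algebra $(H^{\bullet+1}_\mu(\mathcal{O}), [~,~])$, and satisfy \eqref{eqn-t}. Translating through the identifications $H^\bullet_\mu(\mathcal{O}) = H^\bullet_\alpha(A)$ and $H_\bullet(\mathcal{M}) = H^\alpha_\bullet(A)$ then yields exactly the assertion that $(H^\bullet_\alpha(A), H^\alpha_\bullet(A))$ is a precalculus.

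I do not expect a genuine obstacle here: the theorem is essentially a transport-of-structure statement, and the real content lies in the two earlier propositions (verifying the comp-module axioms \eqref{eqn-p}, \eqref{eqn-q}, \eqref{eqn-r}) together with Kowalzig's general results. The only place where care is needed is the bookkeeping with the twisting map $\alpha$: one must confirm that the powers $\alpha^{p-1}$, $\alpha^{q-1}$, etc., appearing in \eqref{hom-ope} and in the definitions of $\bullet_i$, $\bullet_0$ match up precisely with the powers of $\alpha$ in $d^\alpha$, in $i_f$, and in $\mathcal{L}_f$, so that the abstract formulas \eqref{simplicial-boun}, \eqref{cap-pro}, \eqref{lie-pro} specialize to the stated ones. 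Because the comp-module identities have already been checked with these same $\alpha$-powers in the preceding propositions, this matching is automatic, and the proof reduces to the single sentence ``this follows from \cite[Theorems 4.4 and 4.5]{kowal} applied to the cyclic comp module $\mathcal{M}$,'' possibly with the boundary-map identification spelled out for the reader's convenience.
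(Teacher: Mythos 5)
Your proposal is correct and follows exactly the route the paper takes: the theorem is a transport-of-structure statement obtained by combining the two preceding propositions (that $\mathcal{M}$ is a cyclic unital comp module over $\mathcal{O}$), the identification of the simplicial boundary (\ref{simplicial-boun}) with the Hochschild boundary $d^\alpha$, and Kowalzig's general results \cite[Proposition 4.3, Theorems 4.4 and 4.5]{kowal}. The paper itself offers no further argument beyond this, so your write-up matches its proof in both substance and level of detail.
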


\subsection{Calculus structure}

A hom-associative algebra $(A, \mu, \alpha)$ is said to be regular if $\alpha$ is invertible. In such a case, it can be easily checked that $(A, \mu_{\alpha^{-1}} = \alpha^{-1} \circ \mu)$ is an associative algebra and $\alpha$ is an algebra morphism. Moreover, the hom-associative algebra $(A, \mu, \alpha)$ is then obtained by composition (Example \ref{hom-alg-exam}).

Let $(A, \mu, \alpha)$ be a regular unital hom-associative algebra with unit $1 \in A$. Then the corresponding operad with multiplication $(\mathcal{O}, \mu)$ considered in (\ref{hom-ope}) is unital in the sense of Definition \ref{defn-unit-opera} with $\mathcal{O}(0) = \{ a \in A | \alpha (a) = a \}$ and the partial compositions $\circ_i : \mathcal{O}(p) \otimes \mathcal{O}(q) \rightarrow \mathcal{O}(p+q-1)$, for $p, q \geq 0$ given by the same formula (\ref{hom-ope}). The element $1 \in \mathcal{O}(0)$ is unit as
\begin{align*}
(\mu \circ_1 1 )(a) = \mu (1, \alpha^{-1}(a)) = 1 \cdot \alpha^{-1} (a) = a ~~~ \text{~ and ~} ~~~
(\mu \circ_2 1 )(a) = \mu (\alpha^{-1}(a), 1) = \alpha^{-1} (a) \cdot 1 = a.
\end{align*}

\begin{remark}
In this case, the Hochschild cochain complex is defined from degree $0$ with $C^0_\alpha (A) = \mathcal{O}(A)$ and $\delta_\alpha : C^0_\alpha (A) \rightarrow C^1_\alpha (A)$ by $\delta_\alpha (a) (b) =  \alpha^{-1}(b) \cdot a - a \cdot \alpha^{-1}(b)$, for $a \in C^0_\alpha (A)$ and $b \in A$. More generally, if a hom-associative algebra $A$ is regular and a bimodule $(M, \beta)$ is also regular (i.e. $\beta$ is invertible) then the Hochschild cohomology of $A$ with coefficients in $(M, \beta)$ can be defined from degree $0$.
\end{remark}

It is also easy to see that the Hochschild chains $\{ \mathcal{M} (n) \}_{n \geq 0}$ is infact a cyclic comp module over the unital operad $\mathcal{O}$ in the sense that the identities (\ref{eqn-p}) and (\ref{eqn-r}) holds for $p= 0$. 
Hence by the result of the previous section, we deduce the following.
\begin{thm}
Let $(A, \mu, \alpha)$ be a regular unital hom-associative algebra. Then the pair  $(H^\bullet_\alpha (A), H_\bullet^\alpha (A))$ is a noncommutative differential calculus.
\end{thm}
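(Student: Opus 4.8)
The plan is to deduce the statement from the abstract machinery recalled in Section~\ref{section-3}'s preliminaries, applied to the operad $\mathcal{O} = \{ C^p_\alpha(A) \}$ with multiplication $\pi = \mu$ and the cyclic unital comp module $\mathcal{M} = \{ A \otimes A^{\otimes n} \}$ built in the previous subsection. Since that pair has already been shown to be a precalculus, the only additional input needed to upgrade it to a calculus is that $(\mathcal{O}, \mu)$ is a \emph{unital} operad in the sense of Definition~\ref{defn-unit-opera}, and that $\mathcal{M}$ is a cyclic comp module over this unital operad, i.e. that the comp-module relations (\ref{eqn-p}) and (\ref{eqn-r}) continue to hold when $p=0$. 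Granting these two points, Kowalzig's identity (\ref{cartan-pre-id}) supplies the Connes operator $B$ and the operator $S_f$ on the normalized complexes with $\mathcal{L}_f = [B, i_f] + [b, S_f] - S_{\delta_\pi f}$; passing to (co)homology kills the $S$-terms, leaving the Cartan--Rinehart homotopy formula $\mathcal{L}_f = B \circ i_f - (-1)^p i_f \circ B$ on $(H^\bullet_\alpha(A), H^\alpha_\bullet(A))$, which is exactly what is required.

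First I would check unitality of $(\mathcal{O}, \mu)$. Because $A$ is regular, $\alpha^{-1}$ exists, so formula (\ref{hom-ope}) makes sense for all $p, q \geq 0$: an element of $\mathcal{O}(0) = \{ a \in A : \alpha(a) = a\}$ is a $0$-ary operation, and for $q=0$ the exponent $q-1 = -1$ is read via $\alpha^{-1}$. The three cases of the operad associativity axiom extend verbatim to $p, q, r \geq 0$ — the same bookkeeping with powers of $\alpha$ as in \cite{das1}, now allowing negative exponents — and compatibility with the condition $\beta \circ f = f \circ \alpha^{\otimes n}$ is automatic since $\alpha$ fixes $\mathcal{O}(0)$. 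The unit $e = 1 \in \mathcal{O}(0)$ satisfies $\mu \circ_1 1 = \mu \circ_2 1 = \mathrm{id}_A = \mathds{1}$, which is precisely the computation displayed just before the statement, using $1 \cdot \alpha^{-1}(a) = \alpha^{-1}(a) \cdot 1 = a$.

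Next I would extend the comp-module maps to $\mathcal{O}(0)$ and verify (\ref{eqn-p}), (\ref{eqn-r}) for $p=0$. For $g \in \mathcal{O}(0)$ and $x = a_0 \otimes a_1 \cdots a_n$, the maps $\bullet_i$ (for $1 \le i \le n+1$) and $\bullet_0$ are forced by the same formulas as for $p \ge 1$ with $\alpha^{p-1}$ read as $\alpha^{-1}$: they insert $g$ into position $i$ while twisting the remaining factors by $\alpha^{-1}$. The verification of (\ref{eqn-p}) in the mixed cases (one of $p, q$ equal to $0$) then follows the same pattern as the two Propositions in this subsection — matching the $\alpha$-powers on each tensor factor — and (\ref{eqn-r}), $t(g \bullet_i x) = g \bullet_{i+1} t(x)$, is immediate from the cyclic formula for $t$. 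This makes $\mathcal{M}$ a cyclic comp module over the unital operad $\mathcal{O}$, so the operators $B$ and $S_f$ of Section~\ref{section-3}'s preliminaries are defined, and $b$, $i_f$, $\mathcal{L}_f$ restrict to the normalized subcomplexes $\overline{\mathcal{O}}$, $\overline{\mathcal{M}}$.

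The routine but slightly delicate point — the one I expect to require the most care — is confirming that the normalized hom-Hochschild (co)chain complexes really do compute $H^\bullet_\alpha(A)$ and $H^\alpha_\bullet(A)$. Since $A$ is regular, $(A, \mu_{\alpha^{-1}})$ is an honest unital associative algebra with $\alpha$ an algebra map, and the differentials (\ref{hoch-hom-boundary}) (and $\delta_\alpha$) are the usual Hochschild ones conjugated by suitable powers of $\alpha$; hence the degenerate subcomplex is acyclic exactly as in the associative case, the unit $1$ providing the extra degeneracies. This comparison is the only nontrivial ingredient beyond quoting Kowalzig's identities (\ref{eqn-ab})--(\ref{eqn-cd}) and (\ref{cartan-pre-id}). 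Once it is in place everything assembles: $(\mathcal{O}, \mu)$ is unital, $\mathcal{M}$ is a cyclic comp module over it, so $\mathcal{L}_f = [B, i_f] + [b, S_f] - S_{\delta_\mu f}$ on the normalized complexes, hence the Cartan--Rinehart formula holds on (co)homology, and therefore $(H^\bullet_\alpha(A), H^\alpha_\bullet(A))$ is a noncommutative differential calculus.
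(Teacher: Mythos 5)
Your proposal is correct and follows essentially the same route as the paper: verify that $(\mathcal{O},\mu)$ is unital with $e = 1 \in \mathcal{O}(0)$ (using regularity to make sense of $\alpha^{-1}$ in the partial compositions), check that $\mathcal{M}$ remains a cyclic comp module when $p=0$, and then invoke Kowalzig's identity (\ref{cartan-pre-id}) on the normalized complexes to obtain the Cartan--Rinehart formula on (co)homology. The only difference is that you make explicit the (standard) comparison between the normalized and unnormalized complexes, which the paper delegates to \cite{kowal}.
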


\section{Application}

Using the calculus of the previous section and some additional hypothesis, we construct a Batalin-Vilkovisky algebra structure on the Hochschild cohomology of a hom-associative algebra. In particular, the Hochschild cohomology of a regular unital symmetric hom-associative algebra carries a Batalin-Vilkovisky algebra structure. We start with the following definition.

\begin{defn}
A Gerstenhaber algebra $(\mathcal{A}, \cup, [~, ~])$ is said to be a Batalin-Vilkovisky algebra (BV algebra in short) if there is a BV-generator, i.e. a degree $-1$ map $\triangle : \mathcal{A}^\bullet \rightarrow \mathcal{A}^{\bullet -1}$ satisfying $\triangle^2 = 0 $ and
\begin{align*}
[f, g ] = - (-1)^p  \big( \triangle ( f \cup g) -  \triangle (f) \cup g - (-1)^p f \cup \triangle (g)   \big), ~ \text{ for } f \in \mathcal{A}^p, ~ g \in \mathcal{A}.
\end{align*}
\end{defn}

\begin{lemma}\label{fin-lem}
In a calculus $(\mathcal{A}, \Omega )$, we have for $f \in \mathcal{A}^p$ and $g \in \mathcal{A}^q$,
\begin{align*}
i_{[f, g]} x = (-1)^{q+1}~ i_{f \cup g} B (x) + (-1)^{p+1} B (i_{f \cup g} x) + i_f B ( i_g x ) + (-1)^{pq+p+q}~ i_g  B ( i_f x). 
\end{align*}
\end{lemma}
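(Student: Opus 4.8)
The plan is to obtain the identity as a purely formal consequence of the axioms of a calculus, using only the precalculus relation (\ref{eqn-t}), the fact that $i$ is a graded $(\mathcal{A},\cup)$-module structure (so $i_{f\cup g}=i_f\circ i_g$), graded commutativity of $\cup$ (so $i_g\circ i_f=i_{g\cup f}=(-1)^{pq}\,i_{f\cup g}$), and the Cartan--Rinehart homotopy formula $\mathcal{L}_h=B\circ i_h-(-1)^{|h|}\,i_h\circ B$. Notably, $B^2=0$ is not needed.

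First I would start from (\ref{eqn-t}) for $f\in\mathcal{A}^p$, $g\in\mathcal{A}^q$, namely
\[
i_{[f,g]}=i_f\circ\mathcal{L}_g-(-1)^{p(q+1)}\,\mathcal{L}_g\circ i_f,
\]
and substitute $\mathcal{L}_g=B\circ i_g-(-1)^q\,i_g\circ B$ into both occurrences of $\mathcal{L}_g$. This produces four terms,
\begin{align*}
i_{[f,g]} &= i_f\circ B\circ i_g-(-1)^q\,i_f\circ i_g\circ B\\
&\quad -(-1)^{p(q+1)}\,B\circ i_g\circ i_f+(-1)^{p(q+1)+q}\,i_g\circ B\circ i_f.
\end{align*}
Next I would invoke the module property to rewrite $i_f\circ i_g=i_{f\cup g}$ in the second term and $i_g\circ i_f=(-1)^{pq}\,i_{f\cup g}$ in the third term, while leaving the genuinely mixed terms $i_f\circ B\circ i_g$ and $i_g\circ B\circ i_f$ untouched; evaluated on $x\in\Omega_n$ these are exactly the terms $i_fB(i_gx)$ and $i_gB(i_fx)$ of the statement.

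The only remaining step is sign bookkeeping, which is the one point requiring any care but is not a genuine obstacle. One checks $-(-1)^q=(-1)^{q+1}$; that the exponent $p(q+1)+pq$ of the $B\circ i_{f\cup g}$ term satisfies $p(q+1)+pq=2pq+p\equiv p\pmod 2$, so its coefficient is $-(-1)^p=(-1)^{p+1}$; and that $p(q+1)+q=pq+p+q$. Collecting the four terms in the order of the statement then yields precisely
\[
i_{[f,g]}x=(-1)^{q+1}\,i_{f\cup g}B(x)+(-1)^{p+1}B(i_{f\cup g}x)+i_fB(i_gx)+(-1)^{pq+p+q}\,i_gB(i_fx),
\]
which completes the argument.
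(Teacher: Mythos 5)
Your proof is correct and is essentially identical to the paper's: both expand $i_{[f,g]}=i_f\circ\mathcal{L}_g-(-1)^{p(q+1)}\mathcal{L}_g\circ i_f$ via the Cartan--Rinehart formula for $\mathcal{L}_g$, then use $i_f\circ i_g=i_{f\cup g}$ and graded commutativity of $\cup$ to collect the signs. Nothing further is needed.
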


\begin{proof}
We have
\begin{align*}
i_{[f, g]} =~& [ i_f, \mathcal{L}_g ] = i_f \circ \mathcal{L}_g - (-1)^{ p (q+1)} \mathcal{L}_g \circ i_f \\
=~& i_f \circ ( B \circ i_g - (-1)^q ~i_g \circ B) -(-1)^{p (q+1)} ( B \circ i_g - (-1)^q ~i_g \circ B) \circ i_f  \\
=~& i_f \circ B \circ i_g + (-1)^{q+1}~ i_{f \cup g} \circ B  - (-1)^{p(q+1)} (-1)^{pq} B \circ i_{f \cup g}  + (-1)^{p (q+1) + q} i_g \circ B \circ i_f.
\end{align*}
Hence the proof.
\end{proof}

\begin{prop}\label{final-prop}
Let $(A, \mu, \alpha)$ be a regular unital hom-associative algebra. If there is an element $c \in H^\alpha_d (A)$ such that $B (c) = 0$ and the maps $H^n_\alpha (A) \rightarrow H^\alpha_{d-n} (A),~ f \mapsto i_f c$ are $H^\bullet_\alpha (A)$-module isomorphisms, then the map 
\begin{align*}
\triangle : H^\bullet_\alpha (A) \rightarrow H^{\bullet -1}_\alpha (A) ~~\text{ defined by } ~~ i_{\triangle (f)} c = B (i_f c)
\end{align*}
is a BV-generator on the Gerstenhaber algebra $H^\bullet_\alpha (A)$. In other words, $H^\bullet_\alpha (A)$ is a BV algebra.
\end{prop}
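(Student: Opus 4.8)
The plan is to follow Tradler's strategy for associative algebras \cite{tradler}, working inside the calculus $(H^\bullet_\alpha(A), H_\bullet^\alpha(A))$ supplied by the previous theorem and transporting every identity along the ``Poincar\'e duality'' map $\Phi \colon H^\bullet_\alpha(A) \to H_\bullet^\alpha(A)$, $\Phi(f) = i_f c$, which by hypothesis is bijective (its $H^\bullet_\alpha(A)$-linearity being automatic from $i_{a \cup b} = i_a \circ i_b$).

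First I would verify that $\triangle$ is well defined of degree $-1$. If $f \in H^n_\alpha(A)$ then $i_f c \in H^\alpha_{d-n}(A)$, so $B(i_f c) \in H^\alpha_{d-n+1}(A) = H^\alpha_{d-(n-1)}(A)$, which is exactly the image of $H^{n-1}_\alpha(A)$ under $\Phi$; since $\Phi$ is bijective there is a unique class $\triangle(f) \in H^{n-1}_\alpha(A)$ with $i_{\triangle(f)} c = B(i_f c)$, and $\triangle$ is $k$-linear because $\Phi$ and $B$ are. Then $\triangle^2 = 0$ follows at once: $i_{\triangle^2(f)} c = B(i_{\triangle(f)} c) = B^2(i_f c) = 0$, and injectivity of $\Phi$ forces $\triangle^2(f) = 0$.

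For the BV identity it suffices, by injectivity of $\Phi$, $k$-bilinearity of the cap product, and linearity in $g$, to prove for homogeneous $f \in H^p_\alpha(A)$, $g \in H^q_\alpha(A)$ that
\begin{align*}
i_{[f,g]} c = -(-1)^p \big( i_{\triangle(f\cup g)} c - i_{\triangle(f)\cup g} c - (-1)^p\, i_{f\cup\triangle(g)} c \big).
\end{align*}
Using the definition of $\triangle$, the relation $i_{a\cup b} = i_a\circ i_b$ from (\ref{eqn-ab}), and graded commutativity of $\cup$ on $H^\bullet_\alpha(A)$, I would rewrite the three terms on the right as $i_{\triangle(f\cup g)} c = B(i_f(i_g c))$, $i_{f\cup\triangle(g)} c = i_f(B(i_g c))$, and $i_{\triangle(f)\cup g} c = (-1)^{(p-1)q} i_g(B(i_f c))$, the last one using $\triangle(f)\cup g = (-1)^{(p-1)q} g\cup\triangle(f)$ since $\triangle(f)$ has degree $p-1$.

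Finally I would invoke Lemma \ref{fin-lem} with $x = c$: the hypothesis $B(c) = 0$ annihilates the term $(-1)^{q+1} i_{f\cup g} B(c)$, leaving
\begin{align*}
i_{[f,g]} c = (-1)^{p+1} B(i_f(i_g c)) + i_f(B(i_g c)) + (-1)^{pq+p+q} i_g(B(i_f c)),
\end{align*}
and comparison with the expression from the previous step completes the proof, once one checks that $(-1)^{pq+p+q} = (-1)^{p+(p-1)q}$ (the two exponents differ by $2q$). The main --- indeed \emph{only} --- obstacle is the sign bookkeeping, together with the careful use of graded commutativity of the cup product, which is precisely where the degree shift built into $\triangle$ enters; everything else is formal manipulation inside the calculus.
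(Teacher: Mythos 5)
Your proposal is correct and follows essentially the same route as the paper: apply Lemma \ref{fin-lem} with $x=c$, use $B(c)=0$ to kill the $i_{f\cup g}B(c)$ term, convert each $B\circ i_{(-)}(c)$ into $i_{\triangle(-)}c$ via the defining relation and $i_{a\cup b}=i_a\circ i_b$, fix the last sign by graded commutativity of $\cup$, and conclude by injectivity of $f\mapsto i_f c$. The only difference is that you additionally spell out the well-definedness of $\triangle$ and $\triangle^2=0$, which the paper leaves implicit; the sign bookkeeping you carry out matches the paper's.
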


\begin{proof}
We have from Lemma \ref{fin-lem} that
\begin{align*}
i_{[f, g]} c =~& - (-1)^ p ~ B \circ i_{f \cup g} (c) + i_f \circ B \circ i_g (c) + (-1)^{pq+p+q} ~ i_g \circ B \circ  i_f (c) \\
=~& -(-1)^p i_{\triangle ( f \cup g)} c + i_f \circ i_{\triangle (g)} c + (-1)^{pq + p + q} i_g i_{\triangle (f)} c \\
=~& - (-1)^p \big(   i_{\triangle ( f \cup g)} c - (-1)^p ~ i_{f \cup \triangle (g)} c - i_{\triangle (f) \cup g} c \big).
\end{align*}
Hence the result follows from the given hypothesis.
\end{proof}

In the next, we give a dual analogue of Lemma \ref{fin-lem}, hence a dual version of Proposition \ref{final-prop}. Before that, we need the followings.

Let $(A, \mu, \alpha)$ be a regular unital hom-associative algebra. Then it can be easily checked that the dual space $A^*$ with the linear map $(\alpha^{-1})^* : A^* \rightarrow A^*$ is a bimodule over the hom-associative algebra $A$ with left and right actions given by $(a \theta )(b)  =  \theta ( \alpha^{-1} (b \cdot a))$ and $(\theta a ) (b) = \theta ( \alpha^{-1} (a \cdot b))$, for $a, b \in A$ and $\theta \in A^*$.

Let $B: C^\alpha_n (A, A) \rightarrow C^\alpha_{n+1}(A,A)$ be the Connes boundary map. Then the dual map $B^* : \mathrm{Hom}(A^{\otimes n+1} , A^*) \rightarrow  \mathrm{Hom}(A^{\otimes n} , A^*) $ restricts to a map (denoted by the same notation)  $B^* : C^{n+1}_\alpha (A, A^*) \rightarrow C^n_\alpha (A, A^*)$, for $n \geq 0$ that also passes onto the cohomology $H^\bullet_\alpha (A, A^*)$. The induced map on cohomology is also denoted by $B^*$.

For $f \in C^p_\alpha (A)$ and $m \in C^{|m|}_\alpha (A, A^*)$, we define a product $f \cdot m \in C^{p+|m|}_\alpha (A, A^*)$ by
\begin{align*}
(f \cdot m)(a_1 , \ldots, a_{p+|m|}) = f(a_1, \ldots, a_p ) m (a_{p+1}, \ldots, a_{p+|m|})
\end{align*}
that induces a product on the cohomology level.

\begin{lemma}
Let $(A, \mu, \alpha)$ be a regular unital hom-associative algebra. For $f \in H^p_\alpha (A), ~ g \in H^q_\alpha (A)$ and $m \in H^\bullet_\alpha (A, A^*)$, we have
\begin{align*}
[f, g] \cdot m = (-1)^{p+1} B^* ( ( f \cup g) \cdot m) + f \cdot B^* ( g \cdot m) + (-1)^{pq+p+q}~ g \cdot B^* ( f \cdot m) + (-1)^{q+1} ( f \cup g) \cdot B^* (m).
\end{align*}
\end{lemma}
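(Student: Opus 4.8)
The assertion is the $k$-linear dual of Lemma~\ref{fin-lem}, transported from the Hochschild chain complex $C^\alpha_\bullet(A,A)$ to the cochain complex $C^\bullet_\alpha(A,A^*)$. The plan is to make this duality explicit and then to transpose the operator identity underlying Lemma~\ref{fin-lem}.

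First I would set up the pairing. For $m \in C^n_\alpha(A,A^*)$ and a chain $a_0 \otimes a_1 \cdots a_n \in C^\alpha_n(A,A)$, put $\langle m,\, a_0 \otimes a_1 \cdots a_n\rangle := \big(m(a_1,\dots,a_n)\big)(a_0)$. The equivariance $(\alpha^{-1})^* \circ m = m \circ \alpha^{\otimes n}$ that cuts $C^n_\alpha(A,A^*)$ out of $\Hom(A^{\otimes n},A^*)$ is exactly what absorbs the powers of $\alpha$ occurring in $\delta_\alpha$ and $d^\alpha$, so that $\langle \delta_\alpha m,\, x\rangle = \pm\,\langle m,\, d^\alpha x\rangle$ and the pairing descends to $H^\bullet_\alpha(A,A^*) \otimes H^\alpha_\bullet(A) \to k$. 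Concretely this realises $C^\bullet_\alpha(A,A^*)$ as a subcomplex of the $k$-dual of $C^\alpha_\bullet(A,A)$ that is preserved by the transposes of the cap product $i_f$, of $b = d^\alpha$, and of the Connes operator $B$.

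Next I would record the two adjunctions. Using the explicit formula for $i_f$ displayed in Section~\ref{section-3} together with the left action $(a\theta)(b) = \theta(\alpha^{-1}(b\cdot a))$ of $A$ on $A^*$, one checks $\langle f\cdot m,\, x\rangle = (-1)^{\epsilon}\,\langle m,\, i_f x\rangle$ for a sign $\epsilon = \epsilon(p,|m|)$, while $\langle B^* m,\, x\rangle = \langle m,\, B x\rangle$ is the definition of $B^*$ recalled above. Thus $m \mapsto f\cdot m$ and $m \mapsto B^* m$ are, up to the sign $(-1)^{\epsilon}$, the restrictions of $(i_f)^{*}$ and $B^{*}$. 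Then I would transpose the identity
\[
i_{[f,g]} \;=\; (-1)^{q+1}\, i_{f\cup g}\circ B \;+\; (-1)^{p+1}\, B\circ i_{f\cup g} \;+\; i_f\circ B\circ i_g \;+\; (-1)^{pq+p+q}\, i_g\circ B\circ i_f
\]
from the proof of Lemma~\ref{fin-lem}, using $i_{f\cup g} = i_f\circ i_g$ and the adjunctions, and remembering that transposition reverses composition order, so $i_f\circ B\circ i_g$ becomes $(i_g)^{*}\circ B^{*}\circ (i_f)^{*}$; applied to $m$, this produces the four terms $B^*\big((f\cup g)\cdot m\big)$, $(f\cup g)\cdot B^*(m)$, $g\cdot B^*(f\cdot m)$, $f\cdot B^*(f\cdot m)$ of the statement. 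Since Lemma~\ref{fin-lem} is an identity on homology, to legitimately transpose it one works on the normalized (co)chain complexes, where the relevant relations of \cite[Proposition 4.3, Theorem 4.4, Theorem 4.5]{kowal} hold up to $[b,\,\cdot\,]$ for cocycles $f,g$, so their transposes hold up to $[\delta_\alpha,\,\cdot\,]$; equivalently one may dualise the abstract structures of the earlier sections to a ``dual precalculus'' on $(H^\bullet_\alpha(A), H^\bullet_\alpha(A,A^*))$ with a Cartan--Rinehart formula $\mathcal L^\vee_g = [B^*,\, g\cdot(-)]$ up to sign, and rerun the three-line computation of Lemma~\ref{fin-lem}.

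The only real difficulty is the signs. The graded transpose introduces Koszul signs whose net effect is to interchange the roles of $p$ and $q$, and of $1$ and $(-1)^{pq+p+q}$, in the coefficients — precisely the discrepancy between the naively transposed identity and the one claimed. One therefore has to fix, compatibly, the sign $\epsilon(p,|m|)$ in the cap--product adjunction, the Koszul convention for transposing a composite of graded maps, and the $\alpha$-powers in $i_f$, $f\cdot m$ and $B$; once these are arranged so that the two adjunctions hold on the nose, the coefficients $(-1)^{p+1},\, 1,\, (-1)^{pq+p+q},\, (-1)^{q+1}$ of the claim drop out. The remaining checks — well-definedness of the pairing, stability of the subcomplex under the transposed operators, and $i_{f\cup g} = i_f\circ i_g$ in this model — are routine from the formulas already displayed in the paper.
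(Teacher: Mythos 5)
Your plan is essentially the paper's own proof: the paper evaluates $m$ on the identity of Lemma \ref{fin-lem}, uses exactly your two adjunctions in the concrete form $B^*(m) = (-1)^{|m|}\, m \circ B$ and $m(i_f x) = (-1)^{|m|p}(f\cdot m)(x)$, and then carries out the Koszul sign bookkeeping that you defer (working directly at the level of (co)homology, where the pairing already descends, so no detour through normalized complexes is needed). One typo to fix: your fourth transposed term should read $f\cdot B^*(g\cdot m)$, not $f\cdot B^*(f\cdot m)$.
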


\begin{proof}
We have from the definition of $B^*$ that $B^*(m) = (-1)^{|m|} m \circ B$ and from the Koszul sign convension
\begin{align*}
m ( i_f  x) = (-1)^{|m| p } (f \cdot m )(x).
\end{align*}
On the other hand, from Lemma \ref{fin-lem}, we have
\begin{align*}
m (i_{[f,g]} x) = (-1)^{q+1}~ m (i_{f \cup g} B(x)) + (-1)^{p+1}~ m \circ B (i_{f \cup g} x) + m ( i_f B (i_g x)) + (-1)^{pq+p+q}~ m (i_g B (i_f x)).
\end{align*}
Hence we get
\begin{align*}
(-1)^{|m|(p+q-1)} ([f,g] \cdot m)(x) =~&  (-1)^{q+1 + |m| (p+q) + p+q + |m|}~ B^* ((f \cup g) \cdot m)(x) \\~&+ (-1)^{p+1+|m| + (|m|+1) (p+q)}~ ((f \cup g) \cdot B^*(m)) (x) \\
~&+ (-1)^{|m| p + (p+|m| +1) q + p + |m|}~ (g \cdot B^*(f \cdot m))(x) \\
~&+ (-1)^{pq+p+q + |m|q + (q + |m| +1)p + q + |m|}~ (f \cdot B^*(g \cdot m ))(x).
\end{align*}
Thus the result follows by cancelling the sign $(-1)^{|m|(p+q-1)}$ from both sides.
\end{proof}

\begin{prop}\label{prop-last-prop}
Let $(A, \mu, \alpha)$ be a regular unital hom-associative algebra. If there is an element $m \in H^d_\alpha (A, A^*)$ such that $B^* (m) = 0$ and the maps $H^\bullet_\alpha (A) \rightarrow H^{\bullet + d}_\alpha (A, A^*), ~ f \mapsto f \cdot m$ are $H^\bullet_\alpha (A)$-module isomorphisms, then
\begin{align*}
\triangle : H^\bullet_\alpha (A) \rightarrow H^{\bullet -1}_\alpha (A, A) ~~\text{ defined by }~~ (\triangle f) \cdot m = B^* ( f \cdot m)
\end{align*}
makes the Gerstenhaber algebra $H^\bullet_\alpha (A)$ into a BV algebra.
\end{prop}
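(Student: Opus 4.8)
The plan is to transcribe the proof of Proposition~\ref{final-prop}, replacing the cap product $i_\bullet$ by the product $\cdot\,m$ and the Connes operator $B$ by its dual $B^*$. First I would record that $\triangle$ is well defined: since $f\mapsto f\cdot m$ is an isomorphism $H^n_\alpha(A)\cong H^{n+d}_\alpha(A,A^*)$ in each degree and $B^*$ lowers cochain degree by one, for every $f\in H^n_\alpha(A)$ there is a unique class $\triangle(f)\in H^{n-1}_\alpha(A)$ with $(\triangle f)\cdot m=B^*(f\cdot m)$; $k$-linearity of $\triangle$ is inherited from that of $B^*$, of the product, and of the inverse isomorphism, so $\triangle$ is a $k$-linear map of degree $-1$. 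That $\triangle^2=0$ follows from $(B^*)^2=0$ (the dual of $B^2=0$): applying the defining relation twice gives $(\triangle^2 f)\cdot m=B^*\big((\triangle f)\cdot m\big)=(B^*)^2(f\cdot m)=0$, and injectivity of $f\mapsto f\cdot m$ forces $\triangle^2 f=0$.

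Next I would specialise the identity of the preceding Lemma (the dual of Lemma~\ref{fin-lem}) to the chosen class $m$ and use $B^*(m)=0$ to discard its last term, leaving
\begin{align*}
[f,g]\cdot m = (-1)^{p+1}\, B^*\big((f\cup g)\cdot m\big) + f\cdot B^*(g\cdot m) + (-1)^{pq+p+q}\, g\cdot B^*(f\cdot m).
\end{align*}
Substituting the defining relation of $\triangle$ into each term turns $B^*((f\cup g)\cdot m)$, $B^*(g\cdot m)$ and $B^*(f\cdot m)$ into $(\triangle(f\cup g))\cdot m$, $(\triangle g)\cdot m$ and $(\triangle f)\cdot m$ respectively.

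Then I would flatten the iterated products. Using that the product $\cdot$ makes $H^\bullet_\alpha(A,A^*)$ a graded module over $(H^\bullet_\alpha(A),\cup)$, that is $h\cdot(n\cdot m)=(h\cup n)\cdot m$, together with graded commutativity of $\cup$ and $|\triangle f|=p-1$, one gets
\begin{align*}
f\cdot\big((\triangle g)\cdot m\big)=(f\cup\triangle g)\cdot m, \qquad (-1)^{pq+p+q}\, g\cdot\big((\triangle f)\cdot m\big)=(-1)^{q(p-1)+pq+p+q}(\triangle f\cup g)\cdot m=(-1)^{p}(\triangle f\cup g)\cdot m.
\end{align*}
Collecting the three terms and using $(-1)^{p+1}=-(-1)^p$ yields
\begin{align*}
[f,g]\cdot m = -(-1)^p\big(\triangle(f\cup g) - \triangle f\cup g - (-1)^p\, f\cup\triangle g\big)\cdot m,
\end{align*}
and, since $f\mapsto f\cdot m$ is injective, cancelling $\cdot\,m$ gives precisely the relation defining a BV-generator. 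Hence $\triangle$ is a BV-generator on $(H^\bullet_\alpha(A),\cup,[~,~])$, so $H^\bullet_\alpha(A)$ is a BV algebra.

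The step I expect to require the most care is the flattening, i.e. the fact that $\cdot$ descends to the cup-module structure $h\cdot(n\cdot m)=(h\cup n)\cdot m$ on cohomology: the cochain-level formula for $\cdot$ carries no powers of $\alpha$, whereas the operadic cup product on $\mathcal{O}$ does, so one must check that the two agree after passing to cohomology (the hom-associative analogue of the standard cup-module structure on Hochschild cohomology with coefficients in a bimodule) and that the Koszul signs stay compatible with those already fixed in the preceding Lemma and in Proposition~\ref{final-prop}. Granting that, the remainder is a purely formal transcription of the associative argument.
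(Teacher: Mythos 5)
Your proposal is correct and follows essentially the same route as the paper's (much terser) proof: specialise the dual lemma using $B^*(m)=0$, substitute the defining relation of $\triangle$, flatten via the $H^\bullet_\alpha(A)$-module structure, apply graded commutativity to the $g\cup\triangle(f)$ term, and cancel $\cdot\,m$ by injectivity. Your added checks of well-definedness and $\triangle^2=0$ (which the paper omits but which are needed for the BV-generator definition), and the explicit sign verification $(-1)^{pq+p+q+q(p-1)}=(-1)^p$, are correct.
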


\begin{proof}
We have from the previous lemma that
\begin{align*}
[f, g] \cdot m = (-1)^{p+1} \triangle ( f \cup g) \cdot m  + ( f \cup \triangle (g)) \cdot m + (-1)^{pq + p + q} (g \cup \triangle (f)) \cdot m.
\end{align*}
The result follows by applying the graded commutativity on the last term of the right-hand side and from the given hypothesis.
\end{proof}

A regular unital hom-associative algebra $(A, \mu, \alpha)$ is said to be symmetric if there is a hom-bimodule isomorphism $\Theta : A \xrightarrow{\sim} A^*.$ Thus, in a symmetric algebra, there is an isomorphism $H^\bullet_\alpha (A) \xrightarrow{\sim} H^\bullet_\alpha (A, A^*)$ via the map $\Theta$. Hence the dual map $B^* : H^\bullet_\alpha (A, A^*) \rightarrow H^{\bullet -1}_\alpha (A, A^*)$ induces a map (denoted by the same notation) $B^* : H^*_\alpha (A) \rightarrow H^{\bullet -1}_\alpha (A).$

\begin{thm}
Let $(A, \mu, \alpha)$ be a regular unital symmetric hom-associative algebra. Then the map $B^* : H^*_\alpha (A) \rightarrow H^{\bullet -1}_\alpha (A)$ defines a BV algebra structure on the Gerstenhaber algebra $H^\bullet_\alpha (A).$
\end{thm}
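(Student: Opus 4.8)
The plan is to obtain the theorem as an immediate consequence of Proposition~\ref{prop-last-prop}: we only need to produce a class in $H^\bullet_\alpha(A,A^*)$ satisfying the two hypotheses of that proposition, and then recognise the resulting BV-generator as $B^*$. Since $A$ is symmetric, fix a hom-bimodule isomorphism $\Theta : A \xrightarrow{\sim} A^*$ and set $m := \Theta(1)$, where $1$ is the unit of $A$. Because $a\cdot 1 = 1\cdot a = \alpha(a)$, the unit is a $0$-cocycle in $C^\bullet_\alpha(A)$, so $m$ represents a class in $H^0_\alpha(A,A^*)$; thus we take $d=0$ in Proposition~\ref{prop-last-prop}. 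The first hypothesis, $B^*(m)=0$, then holds vacuously: $B^*$ is defined only on cochains of positive degree and lowers cohomological degree by one, so it kills the degree-zero class $m$.

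For the second hypothesis I would identify the map $f \mapsto f\cdot m$. Unwinding the definition of the product $f\cdot m$ when $|m|=0$ and using that $\Theta$ is an $A$-bimodule map with $\Theta(1)=m$, one finds that $f\cdot m$ agrees, up to the twist by powers of $\alpha$ built into the cochain groups, with the pushforward $\Theta_*(f) := \Theta\circ f$ along $\Theta$. Now $\Theta_*$ is bijective since $\Theta$ is; it is a chain map $C^\bullet_\alpha(A)\to C^\bullet_\alpha(A,A^*)$ because $\Theta$ intertwines the left and right $A$-actions and hence commutes with the coboundary $\delta_\alpha$; and it is compatible with the cup product on $H^\bullet_\alpha(A)$ and with the $H^\bullet_\alpha(A)$-module structure on $H^\bullet_\alpha(A,A^*)$ for the same reason, since both the cup product and the dot product are assembled from the bimodule structure. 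Therefore $\Theta_*$ descends to $H^\bullet_\alpha(A)$-module isomorphisms $H^n_\alpha(A)\xrightarrow{\sim} H^n_\alpha(A,A^*)$, and so do the maps $f\mapsto f\cdot m$; this is exactly the remaining hypothesis of Proposition~\ref{prop-last-prop}.

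Proposition~\ref{prop-last-prop} now applies and yields a BV-generator $\triangle$ on the Gerstenhaber algebra $H^\bullet_\alpha(A)$ determined by $(\triangle f)\cdot m = B^*(f\cdot m)$. Rewriting this via the identification $f\cdot m = \Theta_*(f)$ gives $\Theta_*(\triangle f) = B^*(\Theta_* f)$, i.e. $\triangle = \Theta_*^{-1}\circ B^*\circ \Theta_*$; but this is precisely the map introduced just before the statement of the theorem and there denoted again by $B^*:H^\bullet_\alpha(A)\to H^{\bullet-1}_\alpha(A)$. Hence $B^*$ is a BV-generator, and $H^\bullet_\alpha(A)$ is a BV algebra.

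I expect the only genuine work to be the bookkeeping in the middle paragraph: checking that post-composition with the hom-bimodule isomorphism $\Theta$ really is a chain map and an $H^\bullet_\alpha(A)$-module map, and tracking the powers of $\alpha$ that relate $f\cdot m$ to $\Theta\circ f$. Everything else is formal once Proposition~\ref{prop-last-prop} and the existence of $\Theta$ are granted.
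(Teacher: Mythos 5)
Your proposal is correct and follows essentially the same route as the paper: take $m=[\Theta(1)]\in H^0_\alpha(A,A^*)$, note $B^*(m)=0$ for degree reasons, check that $f\mapsto f\cdot m$ is an $H^\bullet_\alpha(A)$-module isomorphism (the paper asserts this via the identification $f\cdot m\simeq f$, which is exactly your $\Theta_*$ bookkeeping), and apply Proposition~\ref{prop-last-prop}, identifying the resulting $\triangle$ with $B^*$. Your middle paragraph just makes explicit the verification the paper leaves implicit.
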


\begin{proof}
Consider the element $\Theta (1) \in A^* = C^0_\alpha (A, A^*)$ which is a $0$-cocycle. Hence $m = [ \Theta (1) ] \in H^0_\alpha (A, A^*)$. Since $B^*$ decreases degree, we have $B^* (m) = 0$. Moreover, the maps $H^\bullet_\alpha (A) \rightarrow H^\bullet_\alpha (A, A^*),~ f \mapsto f \cdot m \simeq f$ are isomorphisms. With these isomorphisms, the map $\triangle$ given in Proposition \ref{prop-last-prop} coincides with $B^*$, as
\begin{align*}
\triangle (f) \cong ( \triangle f) \cdot m = B^* ( f \cdot m ) \cong B^* (f).
\end{align*}
Hence the result follows from Proposition \ref{prop-last-prop}.
\end{proof}
\begin{remark}
This generalizes the corresponding result for associative algebras \cite{tradler}. However, our proof relies on the noncommutative differential calculus associated with (hom-)associative algebras.
\end{remark}
\medskip

\noindent {\bf Acknowledgements.} The research is supported by the fellowship of Indian Institute of Technology (IIT) Kanpur. The author thanks the Institute for support.

\medskip

\noindent {\bf Data Availability Statement.} Data sharing is not applicable to this article as no new data were created or analyzed in this study.

\end{document}